\documentclass[12pt,letterpaper]{article}
\usepackage[square,sort,numbers]{natbib}
\usepackage{bbm}
\usepackage{bm}
\usepackage{chngcntr}
\usepackage{multirow}

\usepackage[colorlinks=true,urlcolor=blue,citecolor=blue,linkcolor=blue,bookmarks=true,bookmarksopen=false]{hyperref}

%
\usepackage[ruled,linesnumbered,vlined]{algorithm2e}
\usepackage{algorithmic}
\usepackage{booktabs}
\usepackage{hhline}

\setlength{\textwidth}{6.44in} 
\setlength{\oddsidemargin}{0in} 
\setlength{\textheight}{8.75in} 
\setlength{\topmargin}{-0.45 in}

\pagestyle{plain}






\newcommand{\vc}{\mathbf{c}}
\newcommand{\va}{\mathbf{a}}
\newcommand{\vX}{\mathbf{X}}
\newcommand{\vY}{\mathbf{Y}}

\newcommand{\vq}{\mathbf{q}}

\newcommand{\vw}{\mathbf{w}}

\newcommand{\vv}{\mathbf{v}} 
\newcommand{\vz}{\mathbf{z}} 
\newcommand{\vx}{\mathbf{x}} 
\newcommand{\vy}{\mathbf{y}} 

\newcommand{\veta}{\bm{\eta}} 

\newcommand{\vb}{\mathbf{b}} 

\newcommand{\vs}{\mathbf{s}} 

\newcommand{\vk}{\mathbf{k}}

\newcommand{\vvartheta}{\bm{\vartheta}}

\newcommand{\Min}{\operatorname{Min}}

\renewcommand{\Omega}{\varOmega}
\newcommand{\R}{\mathbbm{R}}
\newcommand{\p}{\mathbbm{P}}
\newcommand{\E}{\mathbbm{E}}

\newcommand{\X}{\mathcal{X}}

\newcommand{\cvar}{\operatorname{CVaR}}
\newcommand{\wcvar}{\operatorname{WCVaR}}
\newcommand{\var}{\operatorname{VaR}}

\newcommand{\mvar}{\operatorname{MVaR}}
\newcommand{\mcvar}{\operatorname{MCVaR}}
\newcommand{\vmcvar}{\operatorname{VMCVaR}}

\unitlength=1mm

\newcommand{\vG}{\mathbf{G}}



\newcounter{commentcounter}
\setcounter{commentcounter}{1}



\long\def\symbolfootnote[#1]#2{\begingroup%
\def\thefootnote{\fnsymbol{footnote}}\footnote[#1]{#2}\endgroup}

\usepackage{sibarticle}
\usepackage{threeparttable}
\usepackage{pdflscape}

\allowdisplaybreaks
\newcommand{\ignore}[1]{}


\title{Vector-Valued Multivariate Conditional Value-at-Risk}
\ShortTitle{Vector-Valued Multivariate $\cvar$} \ShortAuthors{ Merakl{\i}, K\"{u}\c{c}\"{u}kyavuz} \NumberOfAuthors{1}
\FirstAuthor{Merve Merakl{\i}, Simge K\"{u}\c{c}\"{u}kyavuz}
\FirstAuthorAddress{Industrial and Systems Engineering, University
of Washington, Seattle, WA, U.S.A.\\ {\tt merakli@uw.edu, simge@uw.edu}}

\keywords{conditional value-at-risk; multivariate risk; value-at-risk}

\begin{document}

\maketitle \centerline{\today}

\begin{abstract}
In this study, we propose a new definition of multivariate conditional value-at-risk (MCVaR) as a set of vectors for discrete probability spaces. We explore the properties of the vector-valued MCVaR  (VMCVaR) and show  the advantages of VMCVaR over   the existing definitions given for continuous random variables  when adapted to the discrete case. 
\end{abstract}

\section{Introduction}

\textit{Conditional value-at-risk} (CVaR) is a widely used tool, especially in financial optimization, for assessing the risk associated with a certain decision. The value of CVaR at an uncertainty level $p$ is the expected outcome given that it is unfavorable compared to at least 100$p$\% of the possible realizations. The threshold value corresponds to the $p$-quantile, which is also known as the \textit{value-at-risk} (VaR). VaR is a suitable risk measure for the cases where the aim is to avoid unfavorable outcomes  with high probability. 
However, it does not measure the magnitude of the unfavorable outcomes. 
To address this,  CVaR has been introduced to quantify the expected value of the undesired outcomes as well as computing the VaR as a by-product  \cite[]{rockafellar2000optimization}. 

CVaR is first introduced as a risk measure for univariate random variables. Due to its  desirable properties such as coherence and law invariance, CVaR is widely incorporated into optimization problems to minimize or limit the risk of the corresponding decisions (see e.g., \cite{rockafellar2000optimization}, \cite{rockafellar2002conditional}, \cite{fabian2007algorithms}). However, in many real life problems, decision makers are interested in measuring the risk arising from multiple factors rather than a single outcome. One way of extending the univariate definitions of VaR and CVaR to the multivariate case is to use a scalarization vector for turning the random outcome vector into a scalar. However,  the relative importance of criteria is usually ambiguous.  The weights of criteria with respect to each other are not always quantifiable as a unique vector,  as they may be subject to conflicting opinions of  a group of decision makers. To address this, recent work considers a robust approach, where the set of possible scalarization vectors is assumed to be known and often assumed to be polyhedral or convex, and a worst-case scalarization vector in this set is used to scalarize the multivariate random vector  \cite{noyan2013optimization,liu2015robust,kuccukyavuz2016cut,noyan2016optimization,NMK17}.  These papers not only define the concept of polyhedral multivariate CVaR for finite discrete distributions, but also give optimization models and solution methods when the multivariate random outcome vector is a function of the decisions. For computability of these risk measures, the solution methods rely on sampling from the true distribution. In this context, it is natural to consider finite discrete distributions, described with a finite number of scenarios. 
In this paper, instead of a scalarized real-valued representation of the multivariate CVaR,  we introduce a vector-valued  risk measure without the need for the specification of an ambiguity set describing the possible weights.

The first challenge in defining a  multivariate CVaR is the determination of the $p$-quantile (multivariate VaR). While the $p$-quantile corresponds to a single real value in the univariate setting, $p$-quantile of a random vector may point to several vectors in the multivariate context. There are several studies addressing this issue. \citet{prekopa1990dual} proposes a multivariate definition of VaR as a set of vectors, called $p$-Level Efficient Points ($p$LEPs), rather than a single value for arbitrary multivariate distributions. \citet{cousin2013multivariate} propose an alternative approach for the continuous distribution case that computes a single vector as the expectation of the boundary surface of the multivariate quantile function. \citet{torres2015directional}, on the other hand, introduce a direction parameter and compute the multivariate VaR vector in that direction. \citet{di2015multivariate} and \citet{adrian2016covar} also provide single, vector-valued definitions of multivariate VaR by conditioning on the information on  certain criteria. As in the univariate case, these multivariate VaR definitions are only concerned with the probability of having favorable outcomes, and not the magnitude of the unfavorable outcomes. 

The second challenge in defining a  multivariate CVaR is related with the ambiguity in the characterization of \textit{undesirable} outcomes in the multivariate case. \citet{lee2013properties} provide a single and real-valued definition of multivariate CVaR that classifies a vector as undesirable if it is not better than or equivalent to any $p$LEP as defined in \cite{prekopa1990dual}. \citet{cousin2014multivariate} propose a single, vector-valued CVaR definition such that realizations worse than at least one $p$LEP with respect to every criteria are assumed to be undesirable. The details of these studies providing a definition for multivariate $\cvar$ will be discussed in Section \ref{sec:comp}.

This study is dedicated to defining a vector-valued multivariate CVaR that specifically targets discrete random variables without characterizing a set that describes the relative importance of criteria in advance. Throughout the paper, smaller values of random variables as well as smaller values of risk measures are assumed to be preferable.  We propose a new definition for multivariate CVaR and explore its properties in Section \ref{sec:def}. In Section \ref{sec:comp}, we review the existing definitions, and demonstrate their shortcomings in the case of finite discrete distributions. We show that our new definition overcomes these problems,  which leads to  a unified context for multivariate CVaR. 

\section{Multivariate CVaR} \label{sec:def}

We use the multivariate VaR definition in \cite{prekopa1990dual} as the $p$-quantile function for our multivariate CVaR definition. \citet{prekopa1990dual} defines multivariate VaR using $p$LEPs for the discrete distribution case.

\begin{definition}[\citet{prekopa1990dual}] \label{def:plep}
Let $\vX \in \R^d$ be a random vector and $F$ its c.d.f. The vector $\vs \in \R^d$ is a $p$LEP (or a $p$-efficient point) of the distribution of $\vX$, if $F(\vs)\geq p$ and there is no $\vy \leq \vs,\ \vy \neq \vs$ such that $F(\vy) \geq p$.
\end{definition}

In the multivariate context, there may be several vectors ($p$LEPs) satisfying the above conditions. Hence $\mvar_p(\vX)$ corresponds to the set of such vectors. \citet{dentcheva2000concavity} show that the set $\mvar_p(\vX)$ is finite in the case that $\vX$ has a finite discrete distribution. The problem of finding elements of $\mvar_p(\vX)$ can be seen as the following multi-objective optimization problem:
\begin{subequations}\label{form:optMVar}
\begin{align}
\var_p(X) = &\min\quad \vv\\
 &\text{s.t.} \quad \p(\vX \leq \vv) \geq p.
\end{align}
 \end{subequations}
The elements in $\mvar_p(\vX)$ correspond to the Pareto efficient points of problem (\ref{form:optMVar}) with joint chance constraints. There is no polynomial-time algorithm to solve \eqref{form:optMVar} unless P=NP. \citet{prekopa2012multivariate} reviews several algorithms to enumerate the exponential set $\mvar_p(\vX)$. As mentioned earlier, $\mvar_p(\vX)$ concerns the $p$-quantile of the multivariate risk only. To measure the magnitude of the risk associated with the $(1-p)100$\% worst outcomes of multiple risk factors, a multivariate analogue of CVaR is needed. 

Next, we propose a new definition for multivariate CVaR as a set of vectors, each associated with a $p$LEP in $\mvar$. In our definition, outcomes exceeding a $p$LEP in at least one criterion are considered as undesirable with respect to that $p$LEP.  

\begin{definition}[Vector-Valued Multivariate Conditional Value-at-Risk] \label{def:1}
Assume that $\vX \in \R^d$ is a random vector with a set of $p$LEPs $\mvar_p(\vX)$ at confidence level $p$. The vector-valued multivariate CVaR of $\vX$ at level $p$, denoted as $\vmcvar_p(\vX)$ is defined as,
\begin{align}
\vmcvar_p(\vX) = & \Min\{\mcvar_p(\vX,\veta):\ \veta \in \mvar_p(\vX)\},\label{def:mcvar1}
\end{align}
where 
$$\mcvar_p(\vX,\veta) = \veta+\frac{1}{1-p}\E[(\vX-\veta)_+]$$
and $[(\vX-\veta)_+]_i = \max(0,X_i-\eta_i)$ for all $i \in [d]:=\{1,\dots,d\}$.
\end{definition} 

The $\Min$ operator in Definition \ref{def:1} ensures that  an element of $\vmcvar_p(\vX) $ is non-dominated by another element in this set, i.e., the elements of $\vmcvar_p(\vX) $  form an efficient frontier. (In the context that smaller values are preferable,  a vector $\vx\in \R^d$ is {\it non-dominated} by a vector $\vy\in \R^d$ if $y_i>x_i$ for some $i\in[d]$. In addition, $\vx$ dominates $\vy$ if $y_i\ge x_i$ for all $i\in[d]$.) Note that if $X$ is a  univariate random variable with a finite discrete distribution, then there is a unique element $\mcvar_p(X,\eta)$ for  $\eta = \var_p(X)$, hence the Min operator is not needed.    To see why this operator is needed in the multivariate case, we provide an example next. 

\begin{example}
 Let $\vX$  be  a bivariate random vector with support
$\vX\in  \{(4,1.5),(1,3),(2,5),(2,3),(3,1)\},$
such that each realization is equally likely. At confidence level $p=0.6$, $\mvar_p(\vX)=\{(3,3),(2,5)\}$.  Using this information, we compute $\mcvar_p(\vX, (3,3))=(3.5,4)$, and  $\mcvar_p(\vX, (2,5))=(3.5,5)$.  Clearly, $\mcvar_p(\vX, (3,3))\le \mcvar_p(\vX, (2,5))$, hence we let $\vmcvar(\vX)=\{( 3.5,4)\}$. 
\end{example}

The definition of VMCVaR is  analogous to the univariate definition of CVaR  for a univariate random variable $V$ given by 
$$
\cvar_{p}(V) = \min\{\eta+\frac{1}{1-p} \E(V-\eta)_+ \}.$$ 
Suppose that $V$ follows a finite discrete distribution. In other words, there is a finite number of scenarios, $n$, where $v_i$ represents the realization of $V$ under scenario $i\in[n]$ with probability $q_i$. Then an equivalent representation of univariate CVaR is given by 
a linear program (LP)
\begin{align}
\label{cvardef}\cvar_{p}(V) &= \min \{\eta+\frac{1}{1-p}\sum_{i\in [n]}q_i
w_i~:~w_i\geq v_i-\eta,~\forall~i\in [n],\quad \vw\in
\R_+^{n},~\eta\in\R\}.
\end{align}
It is well known that at an optimal solution to this LP, $\eta=\var_p(V)$. 
Now we consider an analogous representation of $\vmcvar_p(\vX)$. Let $\vx^s=(x_1^s,\dots,x_d^s)$ be the realization of $\vX$ under scenario $s$ with probability $q_s$ for $s\in [n]$. Then,
\begin{subequations}\label{eq:cvarcont}
\begin{align}
\vmcvar_p(\vX)=\min\quad & \veta+\frac{1}{1-p}\E[(\vX-\veta)_+] \\
\text{s.t.}\quad & \p(\vX \leq \veta) \geq p, \label{c1}
\end{align}
\end{subequations}
or equivalently
\begin{subequations}\label{eq:cvardisc}
\begin{align}
\min\quad & \veta+\frac{1}{1-p}\sum_{s \in [n]} q_s \vw^s  \\
\text{s.t.}\quad & w^s_i \geq \vx^s_i - \eta_i, \quad i \in [d],\ s \in [n],  \\
& \vw^s \geq \mathbf{0},\quad  s \in [n],  \\
& \sum_{s \in [n]} q^s \beta_s \leq 1-p,  \\
& \vx_i^s \leq \eta_i + M_{is} \beta_s, \quad  i \in [d],\ s \in [n], \label{eq:bigM} \\
& \beta_s \in \{0,1\}, \quad s \in [n].
\end{align}
\end{subequations}
Here $\veta$ is in the set $\mvar_p(\vX)$, $\beta_s=0$ if under scenario $s$ we have $\vx^s\le \veta$
and
$M_{is},  i \in [d],\ s \in [n]$ is a large enough number so that constraint \eqref{eq:bigM} is trivially satisfied if $\beta_s=1$. The complexity of the multivariate CVaR definition is evident from comparing the formulations for the univariate and multivariate case; the former is an LP \eqref{cvardef}, whereas the latter is a multiobjective mixed-integer program \eqref{eq:cvardisc} or a chance-constrained program \eqref{eq:cvarcont}. However, the main difficulty lies at identifying the MVaR values. Once these are identified, say using the methods reviewed in \cite{prekopa2012multivariate}, calculating $\mcvar_p(\vX,\eta)$ for each $\eta\in\mvar_p(\vX)$ is polynomial in the number of scenarios, $n$.

In the next proposition, we provide a more intuitive definition of $\mcvar_p(\vX,\veta)$ for the case that $\p(\vX \leq \veta) = p$.
\begin{proposition} \label{prop:altcvar} For a random variable $\vX \in \R^d$ and $\veta \in \mvar_p(\vX) $ such that $\p(\vX \leq \veta) = p$, the following equality holds,
\begin{equation} \label{def:mcvar4}
\mcvar_p(\vX,\veta)=\veta+\frac{1}{1-p}\E[(\vX-\veta)_+]=\E[\max(\vX,\veta)|\vX \nleq \veta]
\end{equation}
where  $\vX \nleq \veta$ denotes a component-wise relation such that $X_i > \eta_i$ for at least one $i\in [d]$, and $\max(\vX,\veta)$ represents a vector whose $i^{th}$ component is equal to $\max(X_i,\eta_i)$. 
\end{proposition}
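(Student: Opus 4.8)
The plan is to verify the claimed identity \eqref{def:mcvar4} coordinate by coordinate, since both sides are vectors in $\R^d$. The first equality in \eqref{def:mcvar4} is merely the definition of $\mcvar_p(\vX,\veta)$, so the real work is to show that the $i$-th component of $\veta+\frac{1}{1-p}\E[(\vX-\veta)_+]$, namely $\eta_i+\frac{1}{1-p}\E[(X_i-\eta_i)_+]$, coincides with the $i$-th component of the conditional expectation $\E[\max(\vX,\veta)\mid \vX\nleq\veta]$.

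First I would rewrite the conditional expectation via $\E[Y\mid A]=\E[Y\,\mathbbm{1}_A]/\p(A)$. The hypothesis $\p(\vX\leq\veta)=p$ gives $\p(\vX\nleq\veta)=1-p$, so the $i$-th component of the right-hand side equals $\frac{1}{1-p}\E[\max(X_i,\eta_i)\,\mathbbm{1}_{\vX\nleq\veta}]$. Next I would apply the elementary identity $\max(X_i,\eta_i)=\eta_i+(X_i-\eta_i)_+$ to expand this into $\frac{1}{1-p}\bigl(\eta_i\,\p(\vX\nleq\veta)+\E[(X_i-\eta_i)_+\,\mathbbm{1}_{\vX\nleq\veta}]\bigr)=\eta_i+\frac{1}{1-p}\E[(X_i-\eta_i)_+\,\mathbbm{1}_{\vX\nleq\veta}]$, using linearity of expectation and $\p(\vX\nleq\veta)=1-p$.

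Comparing with the left-hand side, it remains only to establish the key reduction $\E[(X_i-\eta_i)_+]=\E[(X_i-\eta_i)_+\,\mathbbm{1}_{\vX\nleq\veta}]$; equivalently, that the complementary term $\E[(X_i-\eta_i)_+\,\mathbbm{1}_{\vX\leq\veta}]$ vanishes. This is the crux of the argument, and it is where the conditioning event does its work: on the event $\{\vX\leq\veta\}$ every coordinate satisfies $X_i\leq\eta_i$, so $(X_i-\eta_i)_+=0$ there for each $i$. Hence the positive part is supported entirely on $\{\vX\nleq\veta\}$, and the indicator may be inserted without changing the expectation. Substituting this back yields the desired equality component-wise, and therefore as vectors.

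I expect the only subtlety—rather than a genuine obstacle—to be this last observation: the inequality defining the conditioning event is the \emph{joint} relation $\vX\leq\veta$, whose complement $\vX\nleq\veta$ is the event that $X_i>\eta_i$ for at least one coordinate; one must check that this still forces $(X_i-\eta_i)_+$ to be zero off that event for every individual $i$. Once that is in hand, the remaining manipulations are routine linearity-of-expectation computations.
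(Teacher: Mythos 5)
Your proof is correct and is essentially the paper's own argument run in the opposite direction: both rest on the same three facts, namely $\p(\vX\nleq\veta)=1-p$, the identity $\max(X_i,\eta_i)=\eta_i+(X_i-\eta_i)_+$, and the observation that $(X_i-\eta_i)_+$ vanishes on the event $\{\vX\leq\veta\}$ so that the unconditional expectation equals the restricted one. If anything, you make explicit the step the paper leaves implicit (passing from $\frac{1}{\p(\vX\nleq\veta)}\E[(X_i-\eta_i)_+]$ to the conditional expectation), which is a minor improvement in exposition rather than a different route.
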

\begin{proof}
For a  criterion $i \in [d]$, the $i^{th}$ component of $\mcvar_p(\vX,\veta)$ is equal to,
\begin{align*}
(\mcvar_p(\vX,\veta))_i & = \eta_i + \frac{1}{1-p} \E[(X_i-\eta_i)_+] \\
& = \eta_i + \frac{1}{\p(\vX \nleq \veta)} \E[(X_i-\eta_i)_+]\\
& = \eta_i +  \E[(X_i-\eta_i)_+|\vX \nleq \veta]\\
& = \E[\eta_i|\vX \nleq \veta] + \E[(\max(X_i,\eta_i)-\eta_i)|\vX \nleq \veta]\\
& = \E[(\eta_i+ \max(X_i,\eta_i)-\eta_i)|\vX \nleq \veta]\\
& = \E[\max(X_i,\eta_i)|\vX \nleq \veta]. 
\end{align*}
\end{proof}
This is also analogous to the univariate definition of CVaR since for a univariate random variable $V$ such that $\p(V\leq \var_p(V))=p$,
\begin{equation}\label{eq:unicvar}
\begin{split}
\cvar_p(V) &= \E(V|V > \var_p(V)) = \E(\max(V,\var_p(V))|V > \var_p(V))\\
& =  \E(\max(V,\var_p(V))|V \nleq \var_p(V)).
\end{split}
\end{equation}

The proposed multivariate risk measure should satisfy some desired properties to be a useful tool for evaluating the risk of random vectors.  \citet{artzner1999coherent} axiomatize the desired properties of univariate risk measures under the \textit{coherence} concept. A coherent univariate risk measure is normalized, positively homogeneous, translation equivariant, monotone and subadditive. The risk measure of interest in this study, $\vmcvar_p(\vX)$, is a mapping from a $d$-dimensional random vector to a set of $d$-dimensional vectors. Hence, we aim to prove an analogous form of these properties for set-valued multivariate risk measures. Note that for two random vectors, $\vX$ and $\vY$, of the same finite probability space, $\vX \leq \vY$ means that $x_i^s \leq y_i^s$ for each scenario $s\in [n]$ and criterion $i \in [d]$.

\begin{proposition}\label{prop:cohere}
For a $d$-dimensional random vector $\vX$, $\vmcvar_p(\vX)$ satisfies the following properties,
\begin{itemize}
\item[(i)] Normalized: $\vmcvar_p(\mathbf 0)= \{\mathbf 0\}.$
\item[(ii)] Positively homogeneous: $\forall \vvartheta \in \vmcvar_p(\vX), k\vvartheta \in  \vmcvar_p(k\vX)$ for $k \in \R_+$.
\item[(iii)] Translation equivariant: $\forall \vvartheta \in \vmcvar_p(\vX), \vvartheta + \vk \in \vmcvar_p(\vX + \vk) $ for $\vk \in \R^d$.
\item[(iv)] Monotone: $\vX \leq \vY \Rightarrow \forall \vvartheta_1 \in \vmcvar_p(\vX), \exists \vvartheta_2 \in \vmcvar_p(\vY)\text{ s.t. }\vvartheta_2 \geq \vvartheta_1. $
\end{itemize}
\end{proposition}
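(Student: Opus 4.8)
The plan is to prove (i)--(iii) by exploiting that coordinatewise scaling by $k>0$ and translation by $\vk$ are order isomorphisms of $\R^d$: each maps the feasible set $\{\vs:\p(\vX\le\vs)\ge p\}$ and its minimal elements (the $p$LEPs) onto those of the transformed vector, and each commutes with the $\Min$ operator. For (i), when $\vX=\mathbf{0}$ the c.d.f. is the indicator of $\{\vs\ge\mathbf{0}\}$, so the unique $p$LEP is $\mathbf{0}$ and $\mcvar_p(\mathbf{0},\mathbf{0})=\mathbf{0}$, whence $\vmcvar_p(\mathbf{0})=\{\mathbf{0}\}$. For (ii), $\p(k\vX\le k\veta)=\p(\vX\le\veta)$ gives $\mvar_p(k\vX)=k\,\mvar_p(\vX)$, while $(k\vz)_+=k(\vz)_+$ gives $\mcvar_p(k\vX,k\veta)=k\,\mcvar_p(\vX,\veta)$; since $\Min(kS)=k\,\Min(S)$ for $k\ge 0$, every $\vvartheta\in\vmcvar_p(\vX)$ yields $k\vvartheta\in\vmcvar_p(k\vX)$, the case $k=0$ reducing to (i). For (iii) the identical computation with $\veta\mapsto\veta+\vk$ and $\mcvar_p(\vX+\vk,\veta+\vk)=\mcvar_p(\vX,\veta)+\vk$, together with $\Min(S+\vk)=\Min(S)+\vk$, gives the claim. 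I expect these three to be routine.

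The substance is (iv). I would first record two structural facts. First, for a \emph{fixed} threshold $\veta$ the map $\vX\mapsto\mcvar_p(\vX,\veta)$ is monotone: since $(\,\cdot\,)_+$ is nondecreasing, $\vX\le\vY$ forces $(\vX-\veta)_+\le(\vY-\veta)_+$ and hence $\mcvar_p(\vX,\veta)\le\mcvar_p(\vY,\veta)$ coordinatewise. Second, writing $g_{\vX,i}(\eta_i)=\eta_i+\tfrac1{1-p}\E[(X_i-\eta_i)_+]$ for the $i$th coordinate of $\mcvar_p(\vX,\veta)$, each $g_{\vX,i}$ is convex in $\eta_i$, is minimized at $\var_p(X_i)$ with value $\cvar_p(X_i)$, and is therefore nondecreasing on $[\var_p(X_i),\infty)$; moreover every feasible $\veta$ (in particular every $\veta\in\mvar_p(\vX)$) satisfies $\eta_i\ge\var_p(X_i)$ because the marginal c.d.f. dominates the joint one. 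Thus on the feasible region the objective is coordinatewise nondecreasing, which also explains why the $\Min$ over $\mvar_p(\vX)$ in Definition~\ref{def:1} agrees with the vector minimization \eqref{eq:cvarcont}.

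Given $\vvartheta_1=\mcvar_p(\vX,\veta_1)\in\vmcvar_p(\vX)$ with $\veta_1\in\mvar_p(\vX)$, I would produce a feasible threshold for $\vY$ lying above it. Let $\veta_2^0$ be a minimal element of the nonempty upper set $\{\vs\ge\veta_1\}\cap\{\vs:\p(\vY\le\vs)\ge p\}$, so $\veta_2^0\ge\veta_1$ and $\veta_2^0$ is feasible for $\vY$. Combining the two facts coordinatewise gives, for each $i$, $(\mcvar_p(\vY,\veta_2^0))_i=g_{\vY,i}(\eta^0_{2,i})\ge g_{\vX,i}(\eta^0_{2,i})\ge g_{\vX,i}(\eta_{1,i})=\vvartheta_{1,i}$, using $\eta^0_{2,i}\ge\eta_{1,i}\ge\var_p(X_i)$ and monotonicity of $g_{\vX,i}$ there. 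Hence $\mcvar_p(\vY,\veta_2^0)\ge\vvartheta_1$.

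The hard part is upgrading $\veta_2^0$ to a point of the frontier $\vmcvar_p(\vY)$ while preserving domination over $\vvartheta_1$. This is genuinely delicate: $\mvar_p(\vX)$ and $\mvar_p(\vY)$ need not be in any monotone correspondence — one can exhibit $\vX\le\vY$ for which no $p$LEP of $\vY$ lies above a given $p$LEP of $\vX$ — and replacing $\veta_2^0$ by a $p$LEP $\veta_2\le\veta_2^0$ through the $\Min$ operator lowers precisely the coordinates that the floor $\vs\ge\veta_1$ was propping up, which can a priori push the objective below $\vvartheta_1$ there. In fact the down-set of the frontier is strictly contained in the down-set of the full feasible image, so $\mcvar_p(\vY,\veta_2^0)\ge\vvartheta_1$ alone does not suffice. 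The route I would pursue exploits that $\vvartheta_1$ is the image of a \emph{minimal} feasible point together with the piecewise-linear convex shape of each $g_{\vY,j}$: a coordinate held at its marginal $\var_p$ contributes exactly the marginal $\cvar_p$, which is itself monotone under $\vX\le\vY$, so I would select a $p$LEP of $\vY$ whose set of coordinates raised above their marginal $\var_p$ matches that of $\veta_1$ and then verify coordinatewise that this matched frontier point dominates $\vvartheta_1$. Pinning down this orientation-matching selection, and discharging the coordinates that do change, is the step I expect to be the main obstacle.
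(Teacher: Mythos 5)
Parts (i)--(iii) of your argument are correct and essentially the paper's: the paper imports the behaviour of $\mvar_p$ under positive scaling and translation from \citet{lee2013properties} and then computes $\mcvar_p(k\vX,k\veta)=k\,\mcvar_p(\vX,\veta)$ and $\mcvar_p(\vX+\vk,\veta+\vk)=\mcvar_p(\vX,\veta)+\vk$, exactly as you do; your additional remark that these maps commute with $\Min$ is needed and trivially true.

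For (iv) your proof is incomplete at precisely the point you flag, and the gap is genuine. What you do prove cleanly is: if $\veta_2\ge\veta_1$ and $\eta_{1,i}\ge\var_p(X_i)$ for all $i$, then $(\mcvar_p(\vY,\veta_2))_i=g_{\vY,i}(\eta_{2,i})\ge g_{\vX,i}(\eta_{2,i})\ge g_{\vX,i}(\eta_{1,i})=\vvartheta_{1,i}$. This coordinatewise argument is a tidier substitute for the paper's scenario-partition chain \eqref{eq:mon1}--\eqref{eq:mon4}, which establishes the same inequality $\mcvar_p(\vX,\veta_1)\le\mcvar_p(\vY,\veta_2)$. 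The missing piece is the existence of $\veta_2\in\mvar_p(\vY)$ with $\veta_2\ge\veta_1$; the paper closes this by citing the monotonicity of $\mvar_p$ from \citet{lee2013properties}, whereas you try to manufacture such a point by an ``orientation-matching'' selection that you do not carry out. Your skepticism about that correspondence is in fact well founded: take $d=2$, $p=1/2$, four equally likely scenarios with $\vx^s\in\{(4,0),(0,4),(0,6),(6,0)\}$ and $\vy^s\in\{(4,0),(0,6),(0,6),(6,0)\}$ (so $\vX\le\vY$ scenario-wise). Then $(4,4)\in\mvar_p(\vX)$ but $\mvar_p(\vY)=\{(0,6),(6,0)\}$ contains no point $\ge(4,4)$, so no argument that routes through a $p$LEP of $\vY$ dominating $\veta_1$ can succeed in general (and the lemma as quoted in the paper fails on this instance, though the conclusion of (iv) still holds there since $\vmcvar_p(\vX)=\{(5,5)\}$ and $\vmcvar_p(\vY)=\{(5,6)\}$). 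Your fallback point $\veta_2^0$, a minimal feasible point of $\vY$ above $\veta_1$, need not be a $p$LEP, so $\mcvar_p(\vY,\veta_2^0)$ need not be among the candidates entering $\Min$. Finally, you correctly identify a further step that even the paper leaves unaddressed: producing some $\veta_2\in\mvar_p(\vY)$ with $\mcvar_p(\vY,\veta_2)\ge\vvartheta_1$ does not yet place a dominating vector inside $\vmcvar_p(\vY)$, since that candidate may be eliminated by the $\Min$ operator and the element eliminating it satisfies the reverse inequality. As it stands, (iv) is not proved.
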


\begin{proof}
It can be easily seen that $\vmcvar_p(\vX)$ is normalized. We will prove the remaining properties using the fact that for all $\vvartheta \in \vmcvar_p(\vX)$, $\vvartheta = \mcvar_p(\vX,\veta)$ for some $\veta \in \mvar_p(\vX)$.
\begin{itemize}
\item[(ii)] From \cite{lee2013properties}, we know if $\veta \in \mvar_p(\vX)$, then $k\veta \in \mvar_p(k\vX)$ for $k \in \R_+$. Hence,
\begin{align*}
k \mcvar_p(\vX, \veta)  &=k\left(\veta+\frac{1}{1-p}\E[(\vX-\veta)_+]\right)     \\
&=k\veta+\frac{1}{1-p}\E[(k\vX-k\veta)_+] =\mcvar_p(k\vX,k\veta).
\end{align*}
\item[(iii)] \citet{lee2013properties} show that $\veta + \vk \in \mvar_p(\vX + \vk)$  for $\vk \in \R^d$ when $\veta \in \mvar_p(\vX)$. Using this,
\begin{align*}
\mcvar_p(\vX, \veta)  +\vk &=\veta+\frac{1}{1-p}\E[(\vX-\veta)_+] +\vk    \\
&=\veta+\vk+\frac{1}{1-p}\E[(\vX+\vk-\veta-\vk)_+] = \mcvar_p(\vX+\vk,\veta+\vk) .
\end{align*}
\item[(iv)] By the monotonicity property of MVaR given in \cite{lee2013properties}, $\vX \leq \vY$ implies that for all $\veta_1 \in \mvar_p(\vX)$ there exists $\veta_2 \in \mvar_p(\vY)$ such that $\veta_2 \geq \veta_1$ and there is no $\veta_2 \in \mvar_p(\vY)$ such that $\veta_2 < \veta_1$ for some $\veta_1 \in \mvar_p(\vX)$. Consider the set of undesirable scenarios for $\vX$, denoted by $S:=\{s\in[n]:\vx^s  \nleq \veta_1\}$ partitioned into two subsets: $S_1=\{s\in[n]:\vx^s  \nleq \veta_1, \vy^s<\veta_2\}$, and $S_2=\{s\in[n]:\vx^s  \nleq \veta_1, \vy^s\nless\veta_2\}$. Using this definition, 
\begin{align}
\vvartheta_1 &= \mcvar_p(\vX, \veta_1)=\veta_1+\frac{1}{1-p}\sum_{s\in[n]}\E(\vX-\veta_1)_+=\veta_1+\frac{1}{1-p}\sum_{s\in S}q_s(\vx^s-\veta_1)_+ \notag \\
&=\veta_1+\frac{1}{1-p}\sum_{s\in S_1} q_s(\vx^s-\veta_1)_+ + \frac{1}{1-p}\sum_{s\in S_2} q_s(\vx^s-\veta_1)_+ \label{eq:mon1}\\
& \le \veta_1+ \frac{1}{1-p}\sum_{s\in S_1} q_s(\veta_2-\veta_1) + \frac{1}{1-p}\sum_{s\in S_2} q_s(\vx^s-\veta_2+\veta_2-\veta_1)_+  \label{eq:mon2}\\
& \le \veta_1+ \frac{1}{1-p}\sum_{s\in S} q_s(\veta_2-\veta_1) + \frac{1}{1-p}\sum_{s\in S_2} q_s(\vx^s-\veta_2)_+  \label{eq:mon3}\\
&\le \veta_2+\frac{1}{1-p}\sum_{s\in [n]} q_s(\vy^s-\veta_2)_+= \mcvar_p(\vY, \veta_2)= \vvartheta_2, \label{eq:mon4}
\end{align}
where equality \eqref{eq:mon1} follows from the definition of sets $S_1,S_2$; inequality \eqref{eq:mon2} follows from the fact that for $s\in S_1$, $\vx^s\le \vy^s< \veta_2$;  inequality \eqref{eq:mon3} follows from the facts that $(\va+\vb)_+\le (\va)_+ + (\vb)_+$ for $\vb\ge \mathbf 0$ (here $\vb$ is taken as $\veta_2-\veta_1\ge \mathbf 0$); and inequality \eqref{eq:mon3} follows from the fact that $\sum_{s\in S}q_s\le 1-p$ and $\vx^s\le \vy^s$. 

\ignore{
, since otherwise there exists a $p$LEP strictly more favorable than $\veta_2$. 
Let $\vvartheta_1 = \mcvar_p(\vX, \veta_1) $ and $\vvartheta_2 = \mcvar_p(\vY, \veta_2) $. For scenario $\omega$, assume that $\vX(\omega)$ is an undesirable outcome with respect to $\veta_1$, i.e. $\vX(\omega) \nless \veta_1$ so that it is included in the computation of $\vvartheta_1$. There could be two cases:
\begin{itemize}
\item[-] $\vY(\omega)<\veta_2$, meaning that $\vX(\omega)<\veta_2$ since $\vX \leq \vY$. As $\vvartheta_2 \geq \veta_2$ by definition, $\vX(\omega) $ can not contribute enough to make $\vvartheta_1$ larger than or equal to $\vvartheta_2$.
\item[-] $\vY(\omega) \nless \veta_2$, implying realization $\vY(\omega)$ will be included in the computation of $\vvartheta_2$. Since $\vX \leq \vY$, the contribution of $\vY$ to $\vvartheta_2$ will be at least as much as the contribution of $\vX$ to $\vvartheta_1$.

\end{itemize}
Considering these cases, we  conclude that $\vvartheta_2$ is dominated by $\vvartheta_1$.}
\end{itemize}
\end{proof}

Note that \citet{lee2013properties}  show that the properties in Proposition \ref{prop:cohere} hold for their definition of MCVaR (we will give this definition in detail in Section \ref{sec:comp}). While we use the same definition of MVaR as \cite{lee2013properties}, our definition of MCVaR is different, hence the need to prove these properties for our definition. 
 \citet{lee2013properties} also show that a multivariate analogue of the subadditivity property, which is included in the coherence definition of univariate risk measures,  is not satisfied by their MCVaR definition.  We consider this property next. 

\begin{remark}
Risk measure $\mcvar$ does not satisfy the following analogue of the subadditivity property:
\begin{align*}
& \mcvar_p(\vX, \veta_x) + \mcvar_p(\vY, \veta_y) \geq \mcvar_p(\vX+\vY, \veta_{x+y}),\nonumber \\
&\hspace{3cm} \forall \veta_x \in \vmcvar_p(\vX), \veta_y \in \vmcvar_p(\vY), \veta_{x+y} \in \vmcvar_p(\vX+\vY).
\end{align*}
\end{remark}
\begin{proof}
Consider the following counterexample. Let $\vX$ and $\vY$ be bivariate random vectors with supports 
$$\vX\in \{(1,5),(3,2),(2,1),(1,4),(5,5) \} \text{ and } \vY \in\{(4,1.5),(1,3),(2,5),(2,3),(3,1)\},$$
such that each realization is equally likely. At confidence level $p=0.6$, $\mvar_p(\vX)=\{(3,4),(2,5)\}$, $\mvar_p(\vY)=\{(3,3),(2,5)\}$ and $\mvar_p(\vX+\vY)=\{(4,7),(8,6)\}$. Using this information, we compute $\vmcvar_p(\vX)=\{(4,5)\}$, $\vmcvar_p(\vY)=\{(3.5,4)\}$ and $\vmcvar_p(\vX+\vY)=\{(6.5,7),(8,6.75)\}$. It can be seen that $(4,5)+ (3.5,4) \ngeq (8,6.75)$, hence subadditivity property is violated.
\end{proof}

In addition to coherence, from  definition \eqref{cvardef}, it is clear that in the univariate case, CVaR is a conservative approximation of VaR, as $\cvar_p(V)\ge \var_p(V)$ for a univariate random variable $V$. 
 Analogously, it is expected that the multivariate CVaR value should not dominate its corresponding $p$LEP, i.e., we should not have $\mcvar_p(\vX,\veta) < \veta$ for any $\veta \in \mvar_p(\vX)$ for a random vector $\vX \in \R^d$. By definition (\ref{def:mcvar1}), $\vmcvar_p(\vX)$ clearly satisfies this property. 
 In the next section, we show that this may not be the case for some existing definitions of multivariate CVaR.
 
\section{Comparison to Existing Definitions} \label{sec:comp}

In this section, we briefly review other multivariate CVaR definitions from the literature and compare them with the proposed definition of MCVaR.

\citet{prekopa2012multivariate} utilizes the definition of MVaR given in Definition \ref{def:plep} as a basis for their multivariate CVaR definition.
Using the definition of $\mvar_p(\vX)$, they assume an outcome $\vX$ to be desirable if 
\begin{equation} \label{favor}
\vX \in \bigcup\limits_{\vs \in \mvar_p(\vX)}(\vs + \R_-^d)
\end{equation}
and undesirable if
\begin{equation*}
\vX \in \bigcap\limits_{\vs \in \mvar_p(\vX)}(\vs + \R_-^d)^c,
\end{equation*}
where $A^c$ is the complement of set $A$. These definitions imply that an event is undesirable if it is not better than or equivalent to any element in $\mvar_p(\vX)$. Here, we assume that set $\R_-^d$ includes the zero vector along the same lines with the univariate definition of $\var$. Assume that the set of desirable outcomes, defined in the right-hand-side of (\ref{favor}), is denoted as $D_p$. The set $\overline{D_p^c}$ represents the closure of its complement, which is the set of undesirable outcomes.

\begin{definition}[\citet{prekopa2012multivariate}] \label{def:mcvarPrek}
The Multivariate Conditional Value-at-Risk of the random vector $\vX$ at confidence level $p$, denoted by  $\overline{\mcvar}_p(\vX)$, is   defined as
\begin{equation*}
\overline{\mcvar}_p(\vX) = \E[\psi(\vX)|\vX \in \overline{D_p^c}],
\end{equation*}
where $\psi$ is a $d$-variate function such that $\E[\psi(\vX)]$ exists.
\end{definition}

In particular, the function $\psi(\vz)$ in \cite{prekopa2012multivariate} is defined as 
$\psi(\vz) = \sum\limits_{i=1}^d c_i z_i, $
where $\sum_{i=1}^d c_i= 1$ and $\vc\geq \mathbf 0$. This scalarization function is consistent with the weighted sum approach commonly applied in multicriteria optimization, where $c_i$ is interpreted as the relative weight of criterion $i\in [d]$. Using the expansion on conditional probability, we know that
\begin{equation*}
\E[\psi(\vX)] = \E[\psi(\vX)|\vX \notin D_p]\p(\vX \notin D_p) + \E[\psi(\vX)|\vX \in D_p]\p(\vX \in D_p)
\end{equation*}
and consequently we obtain,
\begin{equation*}  
\E[\psi(\vX)|\vX \notin D_p]  = \frac{1}{\p(\vX \notin D_p)} (\E[\psi(\vX)]- \E[\psi(\vX)|\vX \in D_p]\p(\vX \in D_p)).
\end{equation*}
Assuming that the probability of $\p(\vX \in \mvar_p(\vX))$ is negligible, this equation can be used to estimate the value of $\overline{\mcvar}_p(\vX)$. This assumption is quite natural in case of continuous distributions. For discrete probability distributions, it  provides an approximation.

A shortcoming of  this definition is that $\overline{\mcvar}_p(\vX)$ fails to determine the risk associated with an outcome vector at any level $p$ in some discrete cases, which we describe next. 

\begin{example}\label{ex:last2}
Consider bivariate random variable $\vX \in \R^2$ with support $$\{(1,5),(2,4),(3,3),(4,2),(5,1) \}.$$ All realizations are assumed to have equal probability, 0.2. For different values of $p$, the set of desirable outcomes is depicted in Figure \ref{fig:Ex:all} as the gray shaded areas. The black and red points represent the realizations and $p$LEPs of $\vX$, respectively. As it can be seen, all possible realizations of $\vX$ are desirable for each possible value $0.4 \le  p \le 0.8$. Consequently, $\overline{\mcvar}_p(\vX)$ is undefined at any level $0.4 \le  p \le 0.8$.

\begin{figure}[h]
\includegraphics[width=8cm]{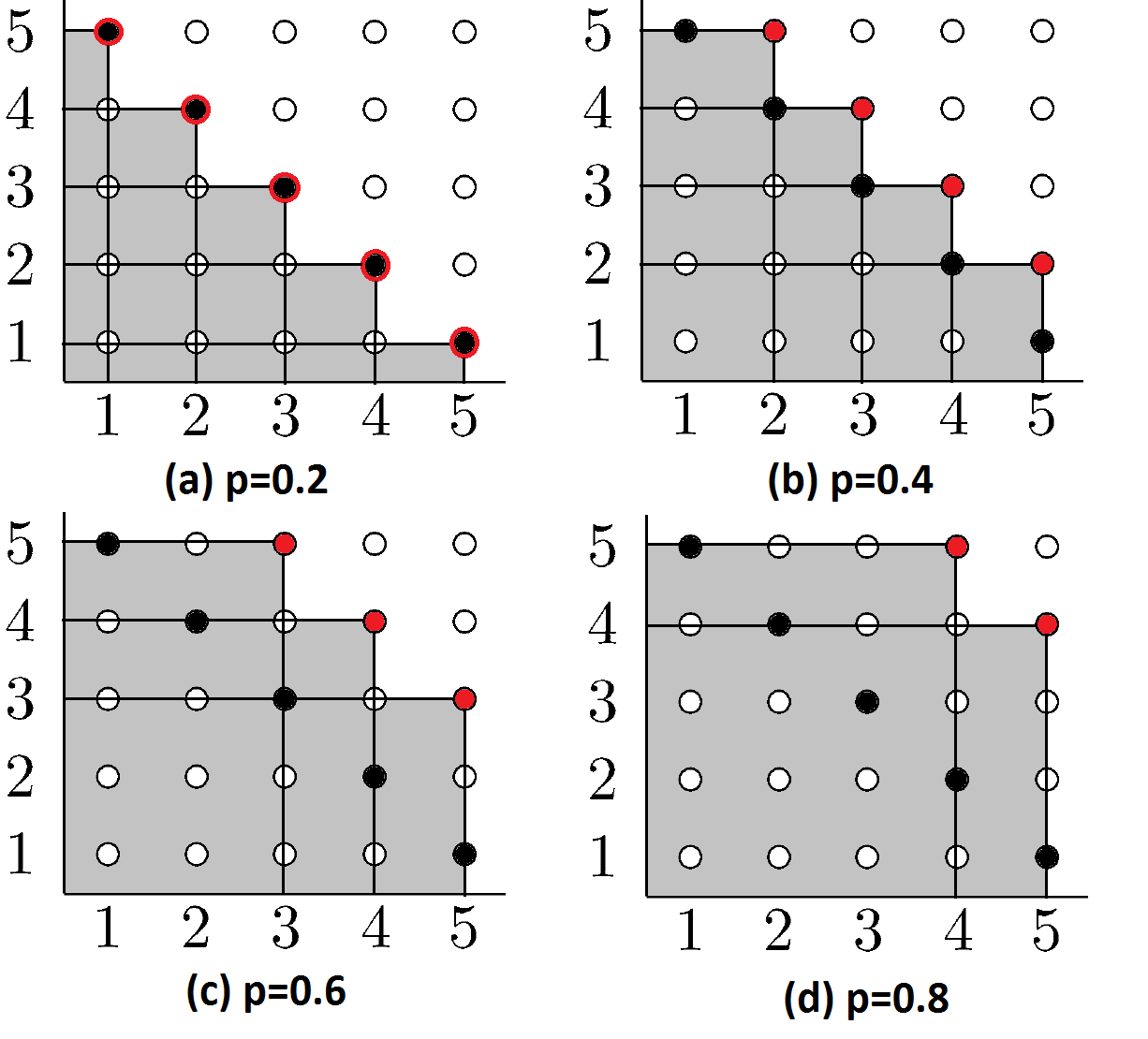}
\centering
\caption{Desirable outcomes for different values of $p$ in Example \ref{ex:last2}}
\label{fig:Ex:all}
\end{figure} 

\end{example}

Another problem is the possibility of obtaining an $\overline{\mcvar}_p(\vX)$ value that is preferable  to $\psi(\veta)$ for some $\veta \in \mvar_p(\vX)$. Consider the following example.

\begin{example}
Let $\vX \in \R^2$ be a bivariate random variable with support 
$$ \{(1,5),(2,4),(3,3),(4,2),(5,1) \},$$
such that $\p(\vX=(1,5))= \p(\vX=(5,1))=0.05$ and all the other realizations are equally likely.  At confidence level $p=0.9$, we obtain $\mvar_p(\vX) = \{(4,4)\}$. For $p$LEP (4,4), the realizations (1,5) and (5,1) are undesirable. The $\overline{\mcvar}$ value corresponding to this $p$LEP is $\psi((3,3))$, which is smaller than $\psi((4,4))$,  for any increasing function $\psi(\cdot)$ including the linear function proposed in  \cite{prekopa2012multivariate}.
\end{example} 

To the best of our knowledge, the only vector-valued alternative to the $\overline{\mcvar}$ of \citet{prekopa2012multivariate} for defining multivariate CVaR is proposed by \citet{cousin2014multivariate} under the name lower-orthant conditional tail expectation (\underline{CTE}). 
\begin{definition}[\citet{cousin2014multivariate}]
For a given random vector $\vX \in \R^d$ with distribution function $F$ at level $p$, the lower-orthant conditional tail expectation is defined as 
\begin{equation}\label{def:lowcte}
\text{\underline{CTE}}_p (\vX) = \E[\vX|F(\vX) \geq p].
\end{equation} 
\end{definition}

This definition includes only the outcomes dominated by or equal to a $p$LEP in the computation of multivariate CVaR, whereas the $\overline{\mcvar}$ of \citet{prekopa2012multivariate} considers the outcomes worse than all $p$LEPs in at least one criterion. A possible issue with this definition is that it may potentially lead to a conservative set of undesirable outcomes and consequently cases in which the risk measure is undefined. For instance, the set of undesirable outcomes in Example \ref{ex:last2} based on this definition is empty for all confidence levels. Our proposed definition remedies this issue, because the values exceeding the $p$-quantile  in not all but some of the criteria are also considered as undesirable in the VMCVaR definition.

Next we compare  these risk measures with the proposed definition of MCVaR for the simple case of a random variable with a single $p$LEP. Assume that we are given a random vector $\vX \in \R^d$ such that $\veta$ is the only $p$LEP of $\vX$ at uncertainty level $p$, i.e. $ \mvar_p(\vX) = \{\veta\}$. By doing so, we ensure that all multivariate CVaR definitions correspond to a single vector and hence they can be compared.
Furthermore, we consider the vector-valued adaptation of  $\overline{\mcvar}_p(\vX)$, denoted by $\overline{\vmcvar}_p(\vX)$, where we replace the scalarization function $\psi(\vX)$ in   Definition \ref{def:mcvarPrek} with the vector $\vX$. In particular, we define
\begin{equation} \label{def:mcvar11}
\overline{\vmcvar}_p(\vX)=\E[\vX|X_i\ge \eta_i \text{ for some } i\in[d]].
\end{equation}
as the expected vector value of  undesirable outcomes as defined in \cite{prekopa2012multivariate}. As explained previously, this may result in a multivariate CVaR value that dominates $\veta$.  The following results would also hold for the case that all measures are scalarized using the same vector.

We  represent definition \eqref{def:lowcte}  equivalently as,
\begin{equation*}  
\text{\underline{CTE}}_p (\vX)=\E[\vX|X_i\geq \eta_i\ \forall i \in [d]].
\end{equation*}

\ignore{

A possible benchmark for multivariate CVaR would be considering the CVaR value of each criterion independently. In this benchmark, we evaluate the CVaR value of each criterion isolated from the effects of the others. In particular, we define
\begin{equation*}  
\underline{\mcvar}_p(\vX):=\big(\E[X_1|X_1\geq \var_p(X_1)],\ldots,\E[X_d|X_d\geq \var_p(X_d)]\big)^\top.
\end{equation*}
Intuitively, we would expect this vector to be less than or equal to the actual vectors of multivariate CVaR since it does not account for the value of different criteria in a realization simultaneously. 
}

Note that in the following discussion, when the set $\vmcvar_p(\vX)$ has a single element in it, we refer to that element as $\vmcvar_p(\vX)$, as well. 

\begin{proposition}\label{prop:comp1}
For a given random vector $\vX$ such that $ \mvar_p(\vX)=\{\veta\}$,  $\p(\vX \leq \veta) = p$, and $\vx^s\ne \veta$ for all $s\in [n]$, where $\vx^s$ are $n<\infty$ possible realizations of $\vX$ with probabilities $q_s$, the following relations hold,
$$\overline{\vmcvar}_p(\vX)\leq  \vmcvar_p(\vX) \leq \text{\underline{CTE}}_p(\vX),$$
when $\overline{\vmcvar}_p(\vX)$ and $\text{\underline{CTE}}_p(\vX)$ are well-defined for $\vX$.
\end{proposition}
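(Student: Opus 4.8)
The plan is to reduce the two vector inequalities to scalar comparisons, one coordinate $i\in[d]$ at a time, by exploiting that under the hypotheses all three quantities are conditional expectations over \emph{nested} events. Write $G=\{\vX\ge\veta\}$, $B=\{\vX\nleq\veta\}$ and $C=\{X_i\ge\eta_i\text{ for some }i\in[d]\}$; since $\vx^s\ne\veta$ for every $s$ one checks $G\subseteq B\subseteq C$, with $\p(B)=1-p$ because $\p(\vX\le\veta)=p$. By Proposition~\ref{prop:altcvar}, $\vmcvar_p(\vX)=\E[\max(\vX,\veta)\mid B]$; since $\veta$ is the unique $p$LEP the level set $\{F\ge p\}$ is the orthant $\veta+\R_+^d$, so $\text{\underline{CTE}}_p(\vX)=\E[\vX\mid G]=\E[\max(\vX,\veta)\mid G]$ (on $G$ we have $\vX\ge\veta$); and $\overline{\vmcvar}_p(\vX)=\E[\vX\mid C]$. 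Thus every coordinate of every quantity is a conditional expectation of either $X_i$ or $\max(X_i,\eta_i)$ over one of $G,B,C$, and the whole statement becomes a family of scalar inequalities.

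For the first inequality I would argue coordinatewise. On $B$ the pointwise bound $X_i\le\max(X_i,\eta_i)$ gives $\E[X_i\mid B]\le\E[\max(X_i,\eta_i)\mid B]=(\vmcvar_p(\vX))_i$. To pass from $B$ to the larger event $C=B\cup(C\setminus B)$ used by $\overline{\vmcvar}$, note that on $C\setminus B$ (desirable scenarios touching the boundary) we have $X_i\le\eta_i$; writing $\E[X_i\mid C]$ as the probability-weighted merge of $\E[X_i\mid B]$ and a term bounded above by $\eta_i$, and using $\E[\max(X_i,\eta_i)\mid B]\ge\eta_i$, a short averaging computation yields $\E[X_i\mid C]\le\E[\max(X_i,\eta_i)\mid B]$. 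This settles $\overline{\vmcvar}_p(\vX)\le\vmcvar_p(\vX)$ and is the routine half.

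The real difficulty is the second inequality $\vmcvar_p(\vX)\le\text{\underline{CTE}}_p(\vX)$. Here both sides are conditional expectations of the \emph{same} vector $\vM:=\max(\vX,\veta)$, but over the nested events $G\subseteq B$, so I would write $\E[M_i\mid B]=\lambda\,\E[M_i\mid G]+(1-\lambda)\,\E[M_i\mid B\setminus G]$ with $\lambda=\p(G)/\p(B)$, reducing the claim to the per-coordinate comparison
$$\E[\max(X_i,\eta_i)\mid B\setminus G]\ \le\ \E[X_i\mid G]\qquad\text{for all }i\in[d].$$
This is exactly the step I expect to be the main obstacle: a scenario in $B\setminus G$ is undesirable in coordinate $i$ yet falls outside $G$ because some \emph{other} coordinate lies below its threshold, and nothing in the pointwise bounds prevents such a scenario from carrying a very large $X_i$ that inflates the left-hand side above $\E[X_i\mid G]$. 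Making the inequality go through therefore requires genuinely using the single-$p$LEP structure to control the magnitude of $X_i$ on $B\setminus G$ relative to $G$; I would look for a domination argument tying membership in $B\setminus G$ to a bound of the form $X_i\le\max_{s\in G}x_i^s$, and I would expect that either such a bound follows from the orthant shape of $\{F\ge p\}$ together with $\p(\vX\le\veta)=p$, or else the statement needs an additional hypothesis ruling out outcomes that are extreme in one criterion while favorable in another.
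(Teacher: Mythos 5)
Your treatment of the first inequality is correct and is actually more careful than the paper's own: the paper merely asserts that $\overline{\vmcvar}$ takes the expectation of the smaller integrand $\vX$ under a less restrictive condition and declares $\overline{\vmcvar}_p(\vX)\le\vmcvar_p(\vX)$, whereas your averaging step through the boundary event $C\setminus B$ (where $X_i\le\eta_i$, so that slice contributes at most $\eta_i\le\E[\max(X_i,\eta_i)\mid B]$) is exactly what is needed to make that assertion rigorous. For the second inequality the paper does precisely the thing you decline to do: it rewrites $\text{\underline{CTE}}_p(\vX)=\E[\max(\vX,\veta)\mid \vX\ge\veta]$, observes that the two sides are conditional expectations of the same integrand over nested events, and says ``hence the claim follows,'' with no control of $\max(X_i,\eta_i)$ on $B\setminus G$.

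Your suspicion that this step is the fatal one is justified, and no additional argument will close it: the inequality $\vmcvar_p(\vX)\le\text{\underline{CTE}}_p(\vX)$ is false under the stated hypotheses. Take $d=2$, five equally likely scenarios $(1,3)$, $(2,3)$, $(3,2)$, $(4,4)$, $(M,1)$ with $M>4$, and $p=0.6$. Only two scenarios have first coordinate below $3$ and only two have second coordinate below $3$, so any $\vz$ with $F(\vz)\ge 0.6$ satisfies $\vz\ge(3,3)$, while $F(3,3)=0.6$; hence $\mvar_p(\vX)=\{(3,3)\}$, $\p(\vX\le\veta)=p$, no realization equals $\veta$, and both competing measures are well defined since $G=\{(4,4)\}\ne\emptyset$. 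Here $B\setminus G=\{(M,1)\}$ and the first coordinate of $\vmcvar_p(\vX)$ equals $3+\tfrac{0.2}{0.4}\bigl((4-3)+(M-3)\bigr)=\tfrac{M+4}{2}$, which exceeds $\bigl(\text{\underline{CTE}}_p(\vX)\bigr)_1=4$ for every $M>4$. So the single-$p$LEP hypothesis does not bound $X_i$ on $B\setminus G$ as you hoped it might; the right-hand inequality needs an extra assumption (for instance $\{\vX\nleq\veta\}=\{\vX\ge\veta\}$, or a domination condition on the undesirable scenarios), and the left-hand inequality, which you did prove, is the only part that holds in the stated generality.
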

\begin{proof}
Note that the conditions of the proposition ensure that the risk measures are comparable. 
We consider the case that all risk measures are well-defined for $\vX$ (recall that $ \text{\underline{CTE}}_p(\vX)$ and $\overline{\vmcvar}_p(\vX)$ may not be defined in some cases). Since $\overline{\vmcvar}_p(\vX)$ defined in \eqref{def:mcvar11} takes the expectation of  $\vX$ while $\vmcvar_p(\vX)$ takes the expectation of  $\max(\vX,\veta)$ (see Proposition \ref{prop:altcvar})   and the condition in $\vmcvar_p(\vX)$ given in Proposition \ref{prop:altcvar} is more restrictive, we have $\overline{\vmcvar}_p(\vX)\leq \vmcvar_p(\vX)$. To show that $\vmcvar_p(\vX)\leq \text{\underline{CTE}}_p(\vX)$, we represent $\text{\underline{CTE}}_p(\vX)$ equivalently as,
\begin{equation*}
\text{\underline{CTE}}_p(\vX)=\E[\max(\vX,\veta)|X_i\geq \eta_i\ \forall i \in [d]].
\end{equation*}
The first terms in both definitions are the same except for the conditions, $\text{\underline{CTE}}_p(\vX)$ conditions on the realizations with $\vX \geq \veta$, while $\vmcvar_p(\vX)$ conditions on  the realizations with  $\vX \nleq \veta$). Hence the claim follows. 

\ignore{

Next we need to determine the relationship between  $\underline{\mcvar}_p(\vX)$ and the other definitions. We know that $\eta_i \geq \var_p(X_i)$ for all $i \in [d]$. For each criterion $i\in [d]$, the realizations that are dominated by $\veta$, i.e. $\vX\leq\veta$, with $X_i \leq \var_p(X_i)$ are considered desirable in both $\underline{\mcvar}_p(\vX)$ and $\overline{\vmcvar}_p(\vX)$. 
In the computation of multivariate CVaR using the definition $\underline{\mcvar}_p(\vX)$,  the realizations
 dominated by $\veta$ with $\eta_i \geq X_i > \var_p(X_i)$ are included, whereas using the definition $\overline{\vmcvar}_p(\vX)$, 
  the realizations undominated by $\veta$ such that $X_i \leq \var_p(X_i)$ are included.
Hence, we have $(\overline{\vmcvar}_p(\vX))_i\leq (\underline{\mcvar}_p(\vX))_i$ for all $i \in [d]$. Similar arguments also hold for $\vmcvar_p(\vX)$, but this time, the realizations of the second type are replaced by $\max(\vX,\veta)$. Considering this, it is clear that $\underline{\mcvar}_p(\vX) \leq \vmcvar_p(\vX)$.
}
\end{proof}

\ignore{

Different from the multivariate CVaR definitions previously mentioned in this section, \citet{liu2015robust} consider the worst-case CVaR value of the  scalarized random vector, given a polyhedral set of acceptable weight vectors. In the following, we provide a modified version of their definition for loss functions instead of acceptability functions. Scalarization allows us to obtain a univariate random variable for which the CVaR definition is well-understood. 

\begin{definition}[\citet{liu2015robust}]
Let $\vX$ be a $d$-dimensional random vector
and $C \subseteq \{ \vc \in \R^d_+|\sum_{i \in [d]} c_i=1\} $ a set of scalarization vectors. The worst-case multivariate polyhedral $\cvar$ ($\wcvar$) at confidence level $p \in [0,1)$ with respect to $C$ is defined as
\begin{equation} \label{eq:wmcvar}
 \wcvar_{C,p}(\vX) = \max\limits_{\vc \in C} \cvar_p(\vc^\top \vX).
 \end{equation}
\end{definition}

In the next two propositions, we compare $ \wcvar_{C,p}(\vX)$ for random vector $\vX$ such that $ \mvar_p(\vX)=\{\veta\}$ to the vector-valued multivariate definition of $\cvar$ using different scalarization schemes. 

\begin{proposition} \label{prop:comp2}
For a $d$-dimensional random vector $\vX$ and confidence level $p$ such that $ \mvar_p(\vX)=\{\veta\}$, let $\bar{\vc}$ be a maximizer in \eqref{eq:wmcvar}, i.e., $\bar{\vc} = \arg\max\limits_{\vc \in C} \cvar_p(\vc^\top \vX)$. Then the following holds,
$$ \wcvar_{C,p}(\vX) \leq \bar{\vc}^\top \vmcvar_p(\vX). $$
\end{proposition}
\begin{proof}
Using positive homogeneity and subadditivity properties of univariate CVaR, we obtain,
\begin{align}
 \wcvar_{C,p}(\vX) &= \cvar_p(\bar{\vc}^\top \vX) \\
  &\leq \sum_{i=1}^d \cvar_p(\bar c_i X_i)  \label{eq:rel1} \\
  & = \sum_{i=1}^d \bar c_i \cvar_p(X_i) \label{eq:rel2}\\
  & = \sum_{i=1}^d \bar c_i \E[X_i|X_i> \var_p(X_i)] \label{eq:rel3}\\
  &  \le \sum_{i=1}^d \bar c_i  \E[\max(X_i,\eta_i)|\vX \nleq \veta]  \label{eq:rel4}\\
&=  \bar{\vc}^\top \vmcvar_p(\vX), 
\end{align}
where inequality \eqref{eq:rel1} follows from subadditivity of univariate CVaR, equality \eqref{eq:rel2} follows from homogeneity of  univariate CVaR, equality \eqref{eq:rel3} follows from the definition of CVaR for univariate random variables, inequality \eqref{eq:rel4} follows because $\eta_i \geq \var_p(X_i)$ for all $i \in [d]$.

\end{proof}

Next, given a scalarization set $C$, we consider the worst-case scalarized value of $ \vmcvar_p(\vX)$ and its relation to $ \wcvar_{C,p}(\vX) $. 

\begin{proposition} \label{prop:comp3}
Under the settings in Proposition \ref{prop:comp2}, let  $\tilde{\vc} = \arg\max\limits_{\vc \in C} \vc^\top \vmcvar_p(\vX) $. Then the following holds,
$$ \wcvar_{C,p}(\vX) \leq  \tilde{\vc}^\top \vmcvar_p(\vX). $$
\end{proposition}
\begin{proof}
By definitions of $\bar{\bar{\vc}}$ and $\tilde{\vc}$, Proposition \ref{prop:comp1} and Proposition \ref{prop:comp2},
\begin{align*}
\wcvar_{C,p}(\vX) & \leq \bar{\bar{\vc}}^\top \vmcvar_p(\vX) \leq \tilde{\vc}^\top \vmcvar_p(\vX).
\end{align*}
\end{proof}

In sum, in this paper, we gave a new definition for a vector-valued MCVaR that overcomes some difficulties encountered in extant definitions for  discrete distributions. While it is difficult to compute this value, we showed that there exists definitions such as $ \wcvar_{C,p}(\vX)$ that provide lower bounding approximations of a scalarization of the proposed risk measure. 

\ignore{
\section{Optimization of  Multivariate CVaR}
Recall that in the univariate case under a finite discrete probability space, CVaR of a random variable $V$, with observations $v_i$ and associated probabilities $p_i$ for $i\in[n]$,  is given by a linear program
\begin{align}
\label{cvardef}\cvar_{p}(V) &=&= \max \{\eta-\frac{1}{\alpha}\sum_{i\in [n]}p_i
w_i~:~w_i\geq\eta-v_i,~\forall~i\in [n],\quad \vw\in
\R_+^{n},~\eta\in\R\}
\end{align}
This definition can be embedded in optimization models, where one minimizes the risk associated with the decisions as measured by CVaR for a single performance measure. It is well known that if the decision variables are continuous, and the feasible region is polyhedral, minimizing CVaR can be represented as a linear program. 

When there are multiple performance measures,  the proposed definition of MCVaR can also be used in an optimization setting, where the aim is to minimize the multivariate risk associated with the decision vector. Let $\vx \in \X \subset \R^m$ be a decision vector satisfying constraints given in the set $\X$, and $\vG(\vx)=(G_1(\vx),\ldots,G_d(\vx))$ be the corresponding random outcome vector for all criteria. Using the definition of MVaR in (\ref{form:optMVar}) and MCVaR in (\ref{def:mcvar4}), the problem can be expressed as the following multiobjective optimization formulation,
\begin{align}
\min\quad & \veta+\frac{1}{1-p}\E[(\vG(\vx)-\veta)_+] \nonumber\\
s.t.\quad & \p(\vG(\vx) \leq \veta) \geq p, \label{c1} \\
& \vx \in \X, \nonumber
\end{align}
where the objective is to minimize $\mcvar_p(\vG(\vx))$ and constraint (\ref{c1}) ensures that $\veta$ corresponds to a $p$LEP of $\vG(\vx)$. Assuming a finite set of scenarios, $[n]$, with associated probability vector $\vq$, we can restate this as the linear formulation below.
\begin{align*}
\min\quad & \veta+\frac{1}{1-p}\sum_{s \in [n]} q^s \vw^s\\
s.t.\quad & w^s_i \geq G_i^s(\vx) - \eta_i, \quad i \in [d],\ s \in [n], \\
& \vw \geq 0, \\
& \sum_{s \in [n]} q^s \beta_s \leq 1-p, \\
& G_i^s(\vx) \leq \eta_i + \text{M} \beta_s, \quad  i \in [d],\ s \in [n], \\
& \beta_s \in \{0,1\}, \quad s \in [n], \\
& \vx \in \X.
\end{align*}
Here, for each scenario $s \in [n]$, $\beta_s$ is a binary variable, which is equal to one if $\vG^s(\vx) \leq \veta$ is not satisfied and zero otherwise. This is a large-scale multiobjective program that is hard to solve. As a result, the approximation of the vector-valued multivariate risk using the scalarization methods described in \cite{noyan2013optimization,liu2015robust,kuccukyavuz2016cut,noyan2016optimization,NMK17} is reasonable. 
}

}

In summary, we propose a new definition of a vector-valued multivariate CVaR that is consistent with its univariate counterpart. We compare this definition with alternative vector-valued definitions, and show its advantages.  However, we recognize the computational difficulties involved with obtaining this vector, especially when used in an optimization setting. In this case, we believe that the scalarization approaches in 
\cite{noyan2013optimization,liu2015robust,kuccukyavuz2016cut,noyan2016optimization,NMK17} provide a practical and reasonable estimation of the multivariate risk. 

\section*{Acknowledgment}
Simge K\"{u}\c{c}\"{u}kyavuz and Merve Merakl\i\ are supported, in part, by
National Science Foundation Grants 1732364 and 1733001. 

\bibliographystyle{abbrvnat}
\bibliography{mybibfile}

\begin{thebibliography}{18}
\providecommand{\natexlab}[1]{#1}
\providecommand{\url}[1]{\texttt{#1}}
\expandafter\ifx\csname urlstyle\endcsname\relax
  \providecommand{\doi}[1]{doi: #1}\else
  \providecommand{\doi}{doi: \begingroup \urlstyle{rm}\Url}\fi

\bibitem[Adrian and Brunnermeier(2016)]{adrian2016covar}
T.~Adrian and M.~K. Brunnermeier.
\newblock $\covar$.
\newblock \emph{The American Economic Review}, 106\penalty0 (7):\penalty0
  1705--1741, 2016.

\bibitem[Artzner et~al.(1999)Artzner, Delbaen, Eber, and
  Heath]{artzner1999coherent}
P.~Artzner, F.~Delbaen, J.-M. Eber, and D.~Heath.
\newblock Coherent measures of risk.
\newblock \emph{Mathematical finance}, 9\penalty0 (3):\penalty0 203--228, 1999.

\bibitem[Cousin and Di~Bernardino(2013)]{cousin2013multivariate}
A.~Cousin and E.~Di~Bernardino.
\newblock On multivariate extensions of value-at-risk.
\newblock \emph{Journal of Multivariate Analysis}, 119:\penalty0 32--46, 2013.

\bibitem[Cousin and Di~Bernardino(2014)]{cousin2014multivariate}
A.~Cousin and E.~Di~Bernardino.
\newblock On multivariate extensions of conditional-tail-expectation.
\newblock \emph{Insurance: Mathematics and Economics}, 55:\penalty0 272--282,
  2014.

\bibitem[Dentcheva et~al.(2000)Dentcheva, Pr{\'e}kopa, and
  Ruszczynski]{dentcheva2000concavity}
D.~Dentcheva, A.~Pr{\'e}kopa, and A.~Ruszczynski.
\newblock Concavity and efficient points of discrete distributions in
  probabilistic programming.
\newblock \emph{Mathematical Programming}, 89\penalty0 (1):\penalty0 55--77,
  2000.

\bibitem[Di~Bernardino et~al.(2015)Di~Bernardino, Fern{\'a}ndez-Ponce,
  Palacios-Rodr{\'\i}guez, and Rodr{\'\i}guez-Gri{\~n}olo]{di2015multivariate}
E.~Di~Bernardino, J.~Fern{\'a}ndez-Ponce, F.~Palacios-Rodr{\'\i}guez, and
  M.~Rodr{\'\i}guez-Gri{\~n}olo.
\newblock On multivariate extensions of the conditional value-at-risk measure.
\newblock \emph{Insurance: Mathematics and Economics}, 61:\penalty0 1--16,
  2015.

\bibitem[F{\'a}bi{\'a}n and Veszpr{\'e}mi(2007)]{fabian2007algorithms}
C.~I. F{\'a}bi{\'a}n and A.~Veszpr{\'e}mi.
\newblock Algorithms for handling $\cvar$-constraints in dynamic stochastic
  programming models with applications to finance.
\newblock 2007.

\bibitem[K{\"u}{\c{c}}{\"u}kyavuz and Noyan(2016)]{kuccukyavuz2016cut}
S.~K{\"u}{\c{c}}{\"u}kyavuz and N.~Noyan.
\newblock Cut generation for optimization problems with multivariate risk
  constraints.
\newblock \emph{Mathematical Programming}, 159\penalty0 (1-2):\penalty0
  165--199, 2016.

\bibitem[Lee and Pr{\'e}kopa(2013)]{lee2013properties}
J.~Lee and A.~Pr{\'e}kopa.
\newblock Properties and calculation of multivariate risk measures: $\mvar$ and
  $\mcvar$.
\newblock \emph{Annals of Operations Research}, 211\penalty0 (1):\penalty0
  225--254, 2013.

\bibitem[Liu et~al.(2017)Liu, K{\"u}{\c{c}}{\"u}kyavuz, and
  Noyan]{liu2015robust}
X.~Liu, S.~K{\"u}{\c{c}}{\"u}kyavuz, and N.~Noyan.
\newblock Robust multicriteria risk-averse stochastic programming models.
\newblock \emph{Annals of Operations Research}, pages 1--36, 2017.
\newblock \url{https://doi.org/10.1007/s10479-017-2526-z}.

\bibitem[Noyan and Rudolf(2013)]{noyan2013optimization}
N.~Noyan and G.~Rudolf.
\newblock Optimization with multivariate conditional value-at-risk constraints.
\newblock \emph{Operations Research}, 61\penalty0 (4):\penalty0 990--1013,
  2013.

\bibitem[Noyan and Rudolf(2016)]{noyan2016optimization}
N.~Noyan and G.~Rudolf.
\newblock Optimization with stochastic preferences based on a general class of
  scalarization functions.
\newblock Optimization Online,
  \url{http://www.optimization-online.org/DB_FILE/2016/09/5636.pdf}, 2016.

\bibitem[Noyan et~al.(2017)Noyan, Merakli, and K{\"u}{\c{c}}{\"u}kyavuz]{NMK17}
N.~Noyan, M.~Merakli, and S.~K{\"u}{\c{c}}{\"u}kyavuz.
\newblock Two-stage stochastic programming under multivariate risk constraints
  with an application to humanitarian relief network design.
\newblock Optimization Online
  \url{http://www.optimization-online.org/DB_FILE/2017/01/5804.pdf}, 2017.

\bibitem[Pr{\'e}kopa(1990)]{prekopa1990dual}
A.~Pr{\'e}kopa.
\newblock Dual method for the solution of a one-stage stochastic programming
  problem with random {RHS} obeying a discrete probability distribution.
\newblock \emph{Zeitschrift f{\"u}r Operations Research}, 34\penalty0
  (6):\penalty0 441--461, 1990.

\bibitem[Pr{\'e}kopa(2012)]{prekopa2012multivariate}
A.~Pr{\'e}kopa.
\newblock Multivariate value at risk and related topics.
\newblock \emph{Annals of Operations Research}, 193\penalty0 (1):\penalty0
  49--69, 2012.

\bibitem[Rockafellar and Uryasev(2000)]{rockafellar2000optimization}
R.~T. Rockafellar and S.~Uryasev.
\newblock Optimization of conditional value-at-risk.
\newblock \emph{Journal of risk}, 2:\penalty0 21--42, 2000.

\bibitem[Rockafellar and Uryasev(2002)]{rockafellar2002conditional}
R.~T. Rockafellar and S.~Uryasev.
\newblock Conditional value-at-risk for general loss distributions.
\newblock \emph{Journal of banking \& finance}, 26\penalty0 (7):\penalty0
  1443--1471, 2002.

\bibitem[Torres et~al.(2015)Torres, Lillo, and Laniado]{torres2015directional}
R.~Torres, R.~E. Lillo, and H.~Laniado.
\newblock A directional multivariate value at risk.
\newblock \emph{Insurance: Mathematics and Economics}, 65:\penalty0 111--123,
  2015.

\end{thebibliography}

\end{document}